\newcommand{\correctspacing}{\singlespacing}
\theoremstyle{plain}
\newtheorem{theorem}{Theorem}[section]
\newtheorem{lemma}[theorem]{Lemma}
\newtheorem{que}[theorem]{Question}
\theoremstyle{definition}
\newtheorem{definition}[theorem]{Definition}
\theoremstyle{definition}
\providecommand{\RR}{\mathbb{R}}
\providecommand{\cG}{\mathcal{G}}
\providecommand{\cM}{\mathcal{M}}
\newcommand{\be}{\begin{equation}} \newcommand{\beo}{\begin{equation*}}
\newcommand{\ee}{\end{equation}}    \newcommand{\eeo}{\end{equation*}}
\newcommand{\bea}{\begin{eqnarray}}
\newcommand{\eea}{\end{eqnarray}}
\newcommand{\beao}{\begin{eqnarray*}} \newcommand{\eeao}{\end{eqnarray*}}
\author{}
\author{Claus Griessler}
\thanks{Financial support through FWF-projects  Y782 and P26736 is  acknowledged.}
\date{\today}
\title[Monotonicity as a sufficiency criterion]{$C$-cyclical monotonicity as a sufficient criterion for optimality in the multi-marginal Monge-Kantorovich problem}
\begin{document}

\maketitle

\begin{abstract} This note establishes  that  a generalization of $c$-cyclical monotonicity from the Monge-Kantorovich problem with two marginals gives rise to a sufficient condition for optimality also in the multi-marginal version of that problem. To obtain the result, the cost function is assumed to be bounded by a sum of integrable functions. The proof rests on ideas from martingale transport.\\ 
\noindent\emph{Keywords:}   cyclical monotonicity, mass transport. \\
\end{abstract}

\section{Introduction and result}

Let $X_1, \dots, X_d$ be Polish spaces, and $\mu_1, \dots, \mu_d$ probability measures on their Borel-$\sigma$-fields. By $\cM (\mu_1, \dots, \mu_d)$ we denote the set of probability measures on the space $E = X_1 \times \dots \times X_d$ with marginal distributions $\mu_1, \dots, \mu_d$. Writing $p_i$ for the canonical projections $E \rightarrow X_i$,  a measure $\mu$ on $E$ is in $\cM (\mu_1, \dots, \mu_d)$ if and only if 

\beo
p_i (\mu) = \mu_i ~ \text{ for  } i=1, \dots, d.
\eeo
These measures are called transports or transport plans. Given a measurable cost function $c: E \rightarrow \RR$, the cost of a transport $\mu$ is the integral $\int c \; d\mu$. The multi-marginal Monge-Kantorovich problem is to minimize the cost amongst transports, i.e. to solve

\be{\label{MK}}{\tag{mmMK}}
\inf_{\mu \in \cM (\mu_1, \dots, \mu_d)} \int c \, d\mu.
\ee
There is  a huge literature for the case $d=2$, \emph{the} Monge-Kantorovich problem, see e.g. \cite{Vi03}, \cite{Vi09} or \cite{AmGi13} for an overview. The literature on the case $d > 2$ is more recent and less voluminous. For an overview the reader is referred to \cite{Pa14}. \\
For $d=2$,  a characterization of optimal transport plans is given by the concept of $c$-cyclical monotonicity, see \cite[Ch.\ 5]{Vi09}: under fairly weak assumptions on the cost function, a transport is optimal if and only if it is $c$-cyclically monotone. A transport  is $c$-cyclically monotone if it is concentrated on a $c$-cyclically monotone set $\Gamma \subseteq X_1 \times X_2 = X \times Y$, i.e. a set $\Gamma$ such that  for any pairs $(x_1, y_1), \dots, (x_n, y_n) \in \Gamma$ one has, with $y_{n+1} = y_1$, 

\be
\sum_{i=1}^n c(x_i, y_i) \leq \sum_{i=1}^n c(x_i, y_{i+1}).
\ee

Does such  a characterization also hold for the case $d > 2$? \\
We start with a definition with a built-in mini-theorem that is well-known for $d=2$ and similarly easy to show for $d>2$:\footnote{In order to show (ii) from (i) it is enough to deal with measures $\alpha$ and $\alpha'$ that assume rational values only. One multiplies both $\int c \, d\alpha$ and $\int c \, d \alpha'$ with the integer $\tau$ which is defined as the product of all the denominators appearing in the values of $\alpha$ and $\alpha'$. It is then possible to write $\tau \int c \, d \alpha$ as a sum of the form $\sum_{i=1}^n c(x_1^{(i)}, \dots, x_d^{(i)})$ and because of the assumptions on $\alpha$ and $\alpha'$ one can find permutations to write $\tau \int c \, d\alpha'$ as $\sum_{i=1}^n c ( x_1^{(i)}, x_2^{(\sigma_2(i))}, \dots, x_d^{(\sigma_d(i))})$.}

\begin{definition}
A set $\Gamma \subseteq E$ is $c$-cyclically monotone if it fulfills any of the two following equivalent conditions:
\begin{enumerate}[(i)]
\item 
 for any $n$ and any points $\bigl(x_1^{(1)}, \dots, x_d^{(1)}\bigr), \dots, \bigl(x_1^{(n)}, \dots, x_d^{(n)}\bigr)\in \Gamma$ and  permutations $\sigma_2, \dots, \sigma_{d}: \{1, \dots, n \} \rightarrow \{1, \dots, n\}$, one has
\beo
\sum_{i=1}^n c\bigl(x_1^{(i)}, \dots, x_d^{(i)}\bigr) \leq \sum_{i=1}^n c\bigl( x_1^{(i)}, x_2^{(\sigma_2 (i))}, \dots, x_d^{(\sigma_d (i))} \bigr).
\eeo
\item 
any finite measure $\alpha$  concentrated on finitely many points in $\Gamma$ is a cost-minimizing transport plan between its marginals; i.e. if $\alpha'$ has the same marginals as $\alpha$, then 
\beo
\int c \, d\alpha \leq \int c \, d\alpha'.
\eeo
\end{enumerate}
\end{definition}

A weaker notion of $c$-monotonicity allowing only comparisons of two points in (i) was shown to be a necessary condition for optimality in \cite{Pa12fm}, see also \cite{CoPaMa15}. The necessity of $c$-cyclical  monotonicity in the sense of (i) is included in the results in \cite{BeGr14, Za14}. Cyclical monotonicity was also discussed in \cite{KiPa13} where cost functions that satisfy the twist condition on cyclically monotone or on splitting sets are shown to have a unique Monge solution of \eqref{MK}, but the exact connection between splitting sets and cyclically monotone sets  remains an open question. It is answered here as a byproduct in Lemma 2.5.\\
The question of sufficiency of cyclical monotonicity of  a transport plan for optimality  was open, although there was an early result in \cite{KnSm94} for quadratic costs in the case $d=3$. The situation is hence somewhat similar to the two-marginals case, where the sufficiency of $c$-cyclical monotonicity was open for some time and is now known to require more regularity of the cost function, see \cite{AmPr03, Pr07, ScTe08, BGMS09, BiCa10, Be15}.

In order to prove the sufficiency of $c$-monotonicity for optimality, we assume $c$ to be continuous and  \emph{bounded by a sum of integrable functions}. This means that there are functions $f_i \in L_1(\mu_i)$ such that 
\beo
c (x_1, \dots, x_d) \leq f_1 (x_1) + \dots + f_d (x_d) ~\text{ for all } ~x_1, \dots, x_d.
\eeo

\begin{theorem}
Let $c$ be a continuous cost-function $E \rightarrow [0, \infty)$ which is bounded  by a sum of integrable functions. Let  $\mu$ be a $c$-monotone transport plan in $\cM (\mu_1, \dots, \mu_d)$. Then $\mu$ is optimal. 
\end{theorem}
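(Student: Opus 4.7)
The plan is to argue by contradiction and reduce the failure of optimality directly to a violation of the equivalent characterization (ii) of $c$-cyclical monotonicity. Suppose $\mu$ is concentrated on a Borel $c$-monotone set $\Gamma \subseteq E$ and that some competitor $\nu \in \cM(\mu_1, \dots, \mu_d)$ satisfies $\int c \, d\nu < \int c \, d\mu$. My goal is to extract from $(\mu, \nu)$ two finite measures $\alpha, \alpha'$ concentrated on finitely many points, sharing all $d$ marginals, with $\alpha$ carried by $\Gamma$ and $\int c \, d\alpha > \int c \, d\alpha'$, which contradicts (ii).

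First, I would carry out a Jordan-decomposition reduction. Write $\mu - \nu = \lambda^+ - \lambda^-$ for the Jordan decomposition of the signed measure $\mu - \nu$. Because $\mu$ and $\nu$ share the marginals $\mu_1, \dots, \mu_d$, the signed measure $\mu - \nu$ has vanishing marginals, so $\lambda^+$ and $\lambda^-$ share common (positive, finite) marginals $\eta_1, \dots, \eta_d$. Moreover $\lambda^+ \le \mu$, so $\lambda^+$ is concentrated on $\Gamma$. The integrable-upper-bound hypothesis $c \le f_1 + \cdots + f_d$ with $f_i \in L_1(\mu_i)$ forces $c \in L_1(\lambda^+) \cap L_1(\lambda^-)$, and the failure of optimality rewrites as $\int c \, d\lambda^+ > \int c \, d\lambda^-$. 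Thus it suffices to produce a discrete analogue of the pair $(\lambda^+, \lambda^-)$.

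Next, I would jointly discretize $(\lambda^+, \lambda^-)$ so as to preserve equality of marginals \emph{exactly}. For each $i$, I would disintegrate both $\lambda^+$ and $\lambda^-$ against their common marginal $\eta_i$, and iteratively couple the resulting conditional distributions through the gluing lemma, producing a measurable kernel that refines the approximations coordinate by coordinate. This yields, for every $\epsilon > 0$, finite discrete measures $\alpha_\epsilon$ (carried by $\Gamma$) and $\alpha'_\epsilon$ with identical $i$-th marginals for every $i$, and $\left| \int c \, d\alpha_\epsilon - \int c \, d\lambda^+ \right| < \epsilon$, $\left| \int c \, d\alpha'_\epsilon - \int c \, d\lambda^- \right| < \epsilon$. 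Continuity of $c$ together with the integrable majorant $\sum_i f_i$ supplies cost convergence via dominated convergence. Taking $\epsilon$ smaller than half of the positive gap $\int c \, d\lambda^+ - \int c \, d\lambda^-$ yields finite discrete measures witnessing a contradiction with (ii).

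The hard part will be the exact matching of marginals in the discretization step. Approximating two general positive measures by finite discrete measures while keeping all $d$ marginals identical on the nose — not merely weakly close — requires a careful iterated gluing of the conditional distributions of $\lambda^+$ and $\lambda^-$ against each common marginal $\eta_i$. This is precisely where the techniques from martingale optimal transport promised in the abstract enter: they furnish measurable kernels that exchange mass between the candidate optimizer and the competitor while respecting every marginal constraint. The assumption that $c$ is bounded by a sum of integrable functions plays the dual role of making every cost integral finite and supplying a dominating function for the limiting step.
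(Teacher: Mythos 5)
There is a genuine gap, and it sits exactly where you flag ``the hard part'': the joint discretization of $(\lambda^+,\lambda^-)$ into finitely supported measures with \emph{exactly} equal marginals, with the discretization of $\lambda^+$ still carried by $\Gamma$ and with costs converging. These three requirements pull against each other. If you snap mass to a common finite grid (one representative point per cell in each coordinate), you do get exact marginal matching and, by continuity and the integrable majorant, cost convergence --- but the grid points $(a_1^{j_1},\dots,a_d^{j_d})$ need not lie in $\Gamma$, so condition (ii) cannot be invoked. If instead you keep the support of the discretized $\lambda^+$ inside $\Gamma$ (e.g.\ by choosing representatives in $\Gamma$ cell by cell, or by sampling), then the $i$-th marginals of the two discrete measures live on different finite point sets and agree only approximately, and (ii) requires them to agree on the nose; an ``approximate (ii)'' would then need a stability statement that is essentially equivalent to the theorem itself. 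The gluing lemma does not resolve this: it couples measures along a genuinely common marginal, whereas here the obstruction is precisely to manufacture a common finite marginal without moving the support of $\lambda^+$ off $\Gamma$. This is the known reason why, already for $d=2$, sufficiency of $c$-cyclical monotonicity resisted the naive discretization argument and required extra hypotheses and different techniques. The preliminary reductions (Jordan decomposition, $\lambda^+\le\mu$ hence concentrated on $\Gamma$, equal marginals of $\lambda^\pm$, integrability of $c$) are all correct, but they only restate non-optimality; the theorem's content is entirely in the step you have not supplied.

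The paper avoids discretizing the competitor altogether and instead works on the dual side: it shows that every $c$-cyclically monotone set $\Gamma$ is $c$-\emph{splitting}, i.e.\ admits functions $\varphi_i:X_i\to[-\infty,\infty)$ with $\sum_i\varphi_i\le c$ everywhere and equality on $\Gamma$. For finite subsets of $\Gamma$ this is exact finite-dimensional LP duality (this is where condition (i)/(ii) is actually used); the tuples are then normalized so that each $\varphi_i$ is dominated by the integrable function $x_i\mapsto c(x_1^0,\dots,x_i,\dots,x_d^0)$ for a fixed $x^0\in\Gamma$, and a Tychonoff compactness argument over all finite subsets produces a single splitting tuple for all of $\Gamma$. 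Optimality is then one line: for any competitor $\mu'$, $\int c\,d\mu=\sum_i\int\varphi_i\,d\mu_i\le\int c\,d\mu'$. If you want to salvage your route, you would in effect have to prove this duality statement anyway; I would recommend redirecting the argument toward constructing the splitting tuple rather than toward an exact discrete approximation.
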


\section{Proof of Theorem 1.2}

The proof of Theorem 1.2 takes the proof for the case $d=2$ in \cite{ScTe08} as a blueprint: we show that $c$-cyclically monotone sets are \emph{$c$-splitting} sets. Optimality then follows easily from the assumptions on $c$. We  exploit ideas found in \cite{BeJu14}, where a notion of finite optimality is introduced as a generalization of $c$-cyclical monotonicity to the martingale-transport problem (with two marginals). 
The compactness-argument to show that $c$-cyclically monotone sets are $c$-splitting  is an adapted version of the argument in \cite{BeJu14} to show that finitely optimal sets are ``$c$-good". It is maybe worthy the mentioning that, although the arguments from \cite{BeJu14} can be adapted to the multi-marginal Monge-Kantorovich problem, it is an open question whether this is also possible for the multi-marginal martingale problem. 

\begin{definition}
A set $G \subseteq E$ is called $c$-splitting if there exist $d$  functions $\varphi_{i}: X_{i} \rightarrow [-\infty, \infty)$ such that 
\beo
\varphi_{1} (x_{1}) + \varphi_{2}(x_{2}) + \dots + \varphi_{d}(x_{d}) \leq c(x_{1}, x_{2}, \dots, x_{d}) 
\eeo
holds for all $(x_{1}, x_{2}, \dots, x_{d}) \in E$, and
\beo
\varphi_{1} (x_{1}) + \varphi_{2}(x_{2}) + \dots + \varphi_{d}(x_{d}) = c(x_{1}, x_{2}, \dots, x_{d}) 
\eeo holds for all $(x_{1}, \dots, x_{d}) \in G$.
We call the functions $(\varphi_{1}, \dots, \varphi_{d})$ a $(G,c)$-splitting tuple.\\ 
\end{definition}
The definition of splitting tuples comes without regularity assumptions on the functions $\varphi_i$. If the functions in a $(G,c)$-splitting tuple are measurable, we call it a measurable tuple. The next lemma shows that for continuous $c$ measurability comes at no cost:
\begin{lemma}
If $G$ is a $c$-splitting set and $c$ is continuous, then there is a measurable $(G,c)$-splitting tuple.
\end{lemma}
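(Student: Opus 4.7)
The plan is to build the measurable tuple by iterating $c$-transforms, exploiting a single observation: the infimum of any family of continuous functions is upper semi-continuous, hence Borel measurable, regardless of the cardinality of the family. Continuity of $c$ is the only regularity that will enter.

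Starting from the given, possibly non-measurable, $(G,c)$-splitting tuple $(\varphi_1,\dots,\varphi_d)$, I would define inductively for $i=1,\dots,d$
\beo
\psi_i(x_i) := \inf_{(x_j)_{j\neq i}} \Bigl[\, c(x_1,\dots,x_d) - \sum_{j<i}\psi_j(x_j) - \sum_{j>i}\varphi_j(x_j) \,\Bigr].
\eeo
Two claims are then verified by induction on $i$: the function $\psi_i$ is Borel measurable, and the tuple $(\psi_1,\dots,\psi_i,\varphi_{i+1},\dots,\varphi_d)$ is still a $(G,c)$-splitting tuple.

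For measurability, note that for each fixed choice of $(x_j)_{j\neq i}$ the integrand depends on $x_i$ only through the continuous cost $c$, because the terms $\psi_j(x_j)$ and $\varphi_j(x_j)$ with $j\neq i$ are constant in $x_i$. Hence $\psi_i$ is the pointwise infimum of a family of continuous functions of $x_i$ and is therefore upper semi-continuous, in particular Borel. For the splitting property, the global inequality is immediate from $\psi_i$ being an infimum (rearrange to get $\sum_{j\leq i}\psi_j+\sum_{j>i}\varphi_j\leq c$), while the pointwise bound $\psi_i\geq\varphi_i$ follows by applying the splitting inequality from the previous inductive step to the integrand and then taking the infimum. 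The chain $\psi_j\geq\varphi_j$ for all $j\leq i$ gives $\sum_{j\leq i}\psi_j(x_j)+\sum_{j>i}\varphi_j(x_j)\geq\sum_j\varphi_j(x_j)=c(x_1,\dots,x_d)$ on $G$, and combined with the global inequality this forces equality on $G$.

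After $d$ iterations, $(\psi_1,\dots,\psi_d)$ is a measurable $(G,c)$-splitting tuple. I do not anticipate a serious obstacle: the argument reduces to the elementary observation that infima of continuous functions are upper semi-continuous, which is precisely where the continuity of $c$ is used. The only minor technicality is that $-\infty$ values of the $\varphi_j$ (allowed by the definition) must be handled with the usual conventions, but these do not affect upper semi-continuity.
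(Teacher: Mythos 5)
Your construction is exactly the paper's: the same inductive $c$-transform $\psi_i=\inf\bigl[c-\sum_{j<i}\psi_j-\sum_{j>i}\varphi_j\bigr]$, with measurability coming from upper semi-continuity of infima of continuous functions. The paper merely asserts that the resulting tuple is measurable and $(G,c)$-splitting, whereas you spell out the inductive verification ($\psi_i\geq\varphi_i$, the global inequality, and equality on $G$); this is a correct filling-in of the same argument, not a different route.
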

\begin{proof}
There is a $c$-splitting tuple $(\varphi_1, \dots, \varphi_d)$ by assumption. 
Set 
$$
\tilde{\varphi}_1 (x_1^0) = \inf_{x_2, \dots, x_d} \bigl\{ c(x_1^0, x_2, \dots, x_d) - \varphi_2(x_2) - \dots - \varphi_d(x_d) \bigr\}.$$
If $\tilde{\varphi}_1, \dots, \tilde{\varphi}_i$ are already defined, set 
\begin{equation*}
\begin{aligned}
\tilde{\varphi}_{i+1} (x_{i+1}^0) &=  \inf_{x_1, \dots, x_i, x_{i+2}, \dots, x_d} \bigl\{ c(x_1, \dots, x_i, x_{i+1}^0, x_{i+2}, \dots, x_d ) \\
& \hspace{30mm}  -  \tilde{\varphi}_1 (x_1) - \dots - \tilde{\varphi}_i (x_i) \\
& \hspace{30mm} - \varphi_{i+2}(x_{i+2}) - \dots - \varphi_d(x_d) \bigr\}.
\end{aligned}
\end{equation*}

The functions $\tilde{\varphi}_1, \dots, \tilde{\varphi}_d$ are measurable (in fact, upper semi-continuous), and constitute a $(G,c)$-splitting tuple.
\end{proof}

\begin{lemma}
If $G$ is $c$-cyclically monotone and finite, then it is $c$-splitting.
\end{lemma}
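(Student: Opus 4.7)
The plan is to reduce the statement to a finite-dimensional linear feasibility problem and then apply Farkas's lemma, reading the contradiction off from condition (ii) of Definition 1.1. Write $G = \{g_1, \ldots, g_N\}$ with $g_i = (x_1^{(i)}, \ldots, x_d^{(i)})$, and let $A_j := \{x_j^{(1)}, \ldots, x_j^{(N)}\} \subseteq X_j$ be the finite $j$-th projection of $G$. It suffices to produce real numbers $\varphi_j(a)$ for $j = 1, \ldots, d$ and $a \in A_j$ with $\varphi_1(a_1) + \cdots + \varphi_d(a_d) \leq c(a_1, \ldots, a_d)$ for every $(a_1, \ldots, a_d) \in A_1 \times \cdots \times A_d$, and with equality at every $g_i$. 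Once this is achieved on the finite grid, I extend each $\varphi_j$ to all of $X_j$ by declaring $\varphi_j \equiv -\infty$ off $A_j$: the global splitting inequality is then automatic (the left-hand side collapses to $-\infty$ whenever some $x_j \notin A_j$), and the equalities on $G$ are preserved.

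The existence of the $\varphi_j$ on the $A_j$ is a linear feasibility problem in finitely many real variables, with one bundle of constraints $\sum_j \varphi_j(a_j) \leq c(a_1, \ldots, a_d)$ over $A_1 \times \cdots \times A_d$ and $N$ reverse inequalities $\sum_j \varphi_j(x_j^{(i)}) \geq c(g_i)$ that together with the first bundle force equality at each $g_i$. By Farkas's lemma, infeasibility is equivalent to the existence of non-negative reals $\lambda(a_1, \ldots, a_d) \geq 0$ on $A_1 \times \cdots \times A_d$ and $\mu_i \geq 0$ on $G$, not all zero, such that the coefficient of each variable $\varphi_j(a)$ vanishes and a strict inequality on the constants is produced. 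The vanishing-coefficient condition for coordinate $j$ at the point $a \in A_j$ reads
$$\sum_{(a_1, \ldots, a_d):\, a_j = a} \lambda(a_1, \ldots, a_d) \;=\; \sum_{i:\, x_j^{(i)} = a} \mu_i,$$
which is precisely the statement that the finite measures $\alpha := \sum_i \mu_i\, \delta_{g_i}$ (concentrated on $G$) and $\alpha' := \sum \lambda(a_1, \ldots, a_d)\, \delta_{(a_1, \ldots, a_d)}$ have identical marginals on each $X_j$. The strict constants inequality rearranges to $\int c\, d\alpha > \int c\, d\alpha'$. This directly contradicts condition (ii) of Definition 1.1, since $\alpha$ is a finite measure concentrated on finitely many points of the $c$-cyclically monotone set $G$ and $\alpha'$ shares its marginals; hence the system is feasible.

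The only substantive step is the translation between the Farkas certificate and a pair of finite measures with equal marginals, and this translation is essentially the reason Definition 1.1(ii) was formulated as it was; everything else is bookkeeping. Small care must be taken to check that the reverse inequalities $\sum_j \varphi_j(x_j^{(i)}) \geq c(g_i)$ yield non-negative dual multipliers $\mu_i$ rather than signed ones (this is why I keep (b) as two one-sided inequalities instead of an equality), but no genuine obstacle is expected.
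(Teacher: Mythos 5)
Your argument is correct and is exactly the ``LP duality'' route the paper invokes (its proof of this lemma is a one-line reference to the definition of $c$-monotonicity and LP duality): the Farkas certificate is precisely a pair of finite measures with equal marginals whose existence would violate Definition 1.1(ii). Your extension by $-\infty$ off the finite projections and your care in keeping the equality constraints as paired one-sided inequalities (so the dual multipliers stay non-negative) are both sound.
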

\begin{proof}
Immediate application of the definition of $c$-monotonicity and LP duality, cf. \cite{BeJu14}.
\end{proof}

\begin{lemma}
Let $c$ be continous,  $G$ be a $c$-splitting set, and $x^0 = (x_{1}^0, \dots, x_{d}^0) \in G$. Then there exists a measurable $(G, c)$-splitting tuple $(\varphi_{1}, \dots, \varphi_{d})$, such that 
\beo
\varphi_{i}(x_{i}) \leq c (x_{1}^0, \dots, x_{i-1}^0, x_{i}, x_{i+1}^0, \dots, x_{d}^0) ~\text{ for all } ~ x_{i} \in X_{i}, ~Ê i=1, \dots, d.
\eeo
\end{lemma}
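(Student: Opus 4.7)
The plan is to take any measurable $(G,c)$-splitting tuple produced by Lemma 2.2 and shift its coordinates by constants whose sum is zero, so as to engineer the extra inequality at $x^0$. Constant shifts with zero total sum preserve the splitting property (they change both sides of $\sum_i \varphi_i(x_i) \leq c(x)$ by the same amount and leave the equality on $G$ intact), so this is the natural degree of freedom to exploit. The key reduction is that, for any splitting tuple, applying the splitting inequality at the slice point $(x_1^0,\dots,x_{i-1}^0,x_i,x_{i+1}^0,\dots,x_d^0)\in E$ yields
\[
\varphi_i(x_i)\leq c(x_1^0,\dots,x_{i-1}^0,x_i,x_{i+1}^0,\dots,x_d^0)-\sum_{j\neq i}\varphi_j(x_j^0),
\]
so the desired bound is equivalent to arranging $\sum_{j\neq i}\varphi_j(x_j^0)\geq 0$ for every $i$; and this in turn is guaranteed as soon as each $\varphi_j(x_j^0)\geq 0$.

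Concretely, I would pick a measurable $(G,c)$-splitting tuple $(\psi_1,\dots,\psi_d)$ from Lemma 2.2. Since $x^0\in G$, the splitting equation gives $\sum_i \psi_i(x_i^0)=c(x^0)$, so every $\psi_i(x_i^0)$ is a finite real number. I then define $a_i:=c(x^0)/d-\psi_i(x_i^0)$ and set $\varphi_i:=\psi_i+a_i$. A direct check gives $\sum_i a_i=0$, so $(\varphi_1,\dots,\varphi_d)$ is again a measurable $(G,c)$-splitting tuple, and $\varphi_i(x_i^0)=c(x^0)/d$ for every $i$. Plugging this into the slice inequality displayed above produces
\[
\varphi_i(x_i)\leq c(x_1^0,\dots,x_{i-1}^0,x_i,x_{i+1}^0,\dots,x_d^0)-\tfrac{d-1}{d}c(x^0),
\]
which is bounded above by $c(x_1^0,\dots,x_{i-1}^0,x_i,x_{i+1}^0,\dots,x_d^0)$ because $c(x^0)\geq 0$ by the standing assumption of Theorem~1.2.

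The construction is essentially mechanical, so I do not expect any genuine obstacle. The one place that requires a little care is the normalisation at $x^0$, and in particular the invocation of $c\geq 0$: combining $\sum_i\varphi_i(x_i^0)=c(x^0)$ with the pointwise bounds $\varphi_i(x_i^0)\leq c(x^0)$ forces $(d-1)c(x^0)\geq 0$, so some non-negativity hypothesis on $c$ is intrinsic to this statement rather than an artefact of the argument.
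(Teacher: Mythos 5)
Your proof is correct and follows essentially the same route as the paper: both shift a measurable splitting tuple from Lemma 2.2 by constants summing to zero so that every $\varphi_j(x_j^0)\geq 0$, then read off the bound from the splitting inequality at the slice points through $x^0$. The only (immaterial) difference is the normalisation — you give each coordinate the value $c(x^0)/d$ at $x_i^0$, while the paper assigns all of $c(x^0)$ to $\varphi_1$ and $0$ to the others — and your closing remark that $c(x^0)\geq 0$ is forced by the lemma's conclusion itself is accurate.
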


\begin{proof}
By the assumptions  there is a measurable $(G,c)$-splitting-tuple $(\tilde{\varphi_{1}}, \dots, \tilde{\varphi_{d}})$. As $x^0 \in G$, we have 
\beo
\sum_{i=1}^d \tilde{\varphi}_{i} (x_{i}^0) = c(x^0).
\eeo
Hence, the values $\tilde{\varphi}_{i}(x_{i}^0)$ are all in $\RR$. 
Now define
\begin{eqnarray*}
\varphi_{1}: x_{1} \mapsto \tilde{\varphi}_{1} (x_{1}) + \tilde{\varphi}_{2}(x_{2}^0) + \dots + \tilde{\varphi}_{d}(x_{d}^0)Ê\\
\varphi_{i}: x_{i} \mapsto \tilde{\varphi}_{i} (x_{i}) - \tilde{\varphi}_{i}(x_{i}^0), ~\text{ for } i= 2, \dots, d.
\end{eqnarray*}
We have of $\sum_{i=1}^d \varphi_{i}(x_{i}) = \sum_{i=1}^d \tilde{\varphi}_{i}(x_{i})$, hence $(\varphi_{1}, \dots, \varphi_{d})$ is a $(G,c)$-splitting tuple with 
$\varphi_{1} (x_{1}^0) = c(x^0) \geq 0$ and $\varphi_{i}(x_{i}^0) = 0$ for $i=2, \dots, d$.
We hence have:
\begin{align*}
\varphi_{1} (x_{1})  &\leq c (x_{1}, x_{2}^0, \dots, x_{d}^0 ) ~ \text{ for all } x_{1} \in X_{1} \\
\varphi_{i} (x_{i}) &\leq c (x_{1}^0, \dots, x_{i-1}^0, x_{i}, x_{i+1}^0, \dots, x_{d}^0 ) - \varphi_{1}(x_{1}^0) \\
&\leq c (x_{1}^0, \dots, x_{i-1}^0, x_{i}, x_{i+1}^0, \dots, x_{d}^0 )
 ~ \text{ for all } x_{i} \in X_{i}. 
\end{align*}
\end{proof}

\begin{lemma}
Every $c$-cyclically monotone set $\Gamma$ is  $c$-splitting. 
\end{lemma}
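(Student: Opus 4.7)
The plan is to follow the compactness blueprint from \cite{BeJu14}: build $c$-splitting tuples on an exhaustion of $\Gamma$ by finite subsets and extract a pointwise limit. Since $E$ is Polish, pick a countable dense subset $(z^{(k)})_{k \geq 0}$ of $\Gamma$ and set $x^0 := z^{(0)}$. For each $n$, the finite set $G_n := \{z^{(0)}, \dots, z^{(n)}\}$ inherits $c$-cyclical monotonicity from $\Gamma$, so Lemmas 2.3 and 2.4 together yield a $(G_n,c)$-splitting tuple $(\varphi_1^n, \dots, \varphi_d^n)$ subject to the uniform upper bound
\[
\varphi_i^n(x_i) \leq u_i(x_i) := c(x_1^0, \dots, x_{i-1}^0, x_i, x_{i+1}^0, \dots, x_d^0) \quad \text{for all } x_i \in X_i.
\]

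Next I would perform the compactness step. Choose countable dense subsets $D_i \subseteq X_i$ with $\{z_i^{(k)}\}_{k \geq 0} \subseteq D_i$. At each $d \in D_i$, $\varphi_i^n(d) \leq u_i(d) < \infty$; at each base point $z_i^{(k)}$ the splitting equality $\sum_j \varphi_j^n(z_j^{(k)}) = c(z^{(k)})$ (valid for $n \geq k$), combined with the upper bounds on the remaining coordinates, also yields a lower bound independent of $n$. A diagonal extraction then produces a subsequence along which $\varphi_i^n(d) \to \varphi_i^*(d) \in [-\infty, u_i(d)]$ for every $d \in D_i$ and every $i$, with $\varphi_i^*(z_i^{(k)}) \in \RR$, $\sum_i \varphi_i^*(z_i^{(k)}) = c(z^{(k)})$, and the splitting inequality $\sum_i \varphi_i^*(d_i) \leq c(d_1, \dots, d_d)$ passing to the limit throughout $D_1 \times \dots \times D_d$.

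Finally, I would extend $\varphi_i^*$ to all of $X_i$ by iterating the infimum-convolution recipe of Lemma 2.2, taking infima over $X_j$ for the indices $j < i$ already promoted and over $D_j$ for the indices $j > i$ not yet treated. The resulting tuple $(\varphi_1, \dots, \varphi_d)$ is upper semicontinuous as an infimum of continuous functions, satisfies the splitting inequality everywhere on $E$, and by a short induction obeys $\varphi_i \geq \varphi_i^*$ on $D_i$, whence $\sum_i \varphi_i(z_i^{(k)}) = c(z^{(k)})$ at every $z^{(k)}$. For general $z \in \Gamma$ I pick $z^{(k_j)} \to z$ and chain
\[
c(z) = \lim_j \sum_i \varphi_i(z_i^{(k_j)}) \leq \sum_i \limsup_j \varphi_i(z_i^{(k_j)}) \leq \sum_i \varphi_i(z_i) \leq c(z),
\]
using $\limsup \sum \leq \sum \limsup$, upper semicontinuity of the $\varphi_i$, continuity of $c$, and the splitting inequality; equality throughout shows $(\varphi_1, \dots, \varphi_d)$ is a $(\Gamma,c)$-splitting tuple. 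The hard part is the compactness step: arranging the diagonal extraction so that the limits $\varphi_i^*(z_i^{(k)})$ remain finite and the equalities on each $G_n$ survive in the limit, which is precisely what the normalization from Lemma 2.4 is engineered to provide.
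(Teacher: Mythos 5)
Your proof is correct, but it implements the compactness step differently from the paper. The paper works in the full function space $\overline{\RR}^{X_1}\times\dots\times\overline{\RR}^{X_d}$, which is compact for pointwise convergence by Tychonoff's theorem; it observes that for each finite $G\subseteq\Gamma$ the set of $(G\cup\{x^0\},c)$-splitting tuples dominated by the $c_i$ is a nonempty closed subset, and that these sets have the finite intersection property, so their total intersection contains a tuple that is automatically $(\Gamma,c)$-splitting --- no density argument, no passage to a limit along $\Gamma$, and no continuity of $c$ is needed at this stage (continuity only re-enters afterwards to make the tuple measurable via Lemma 2.2). You instead run a sequential diagonal extraction over countable dense subsets $D_i$, which forces two extra pieces of work: extending the limit functions from $D_i$ to $X_i$ by the inf-convolution of Lemma 2.2 while checking the mixed inequalities $\sum_{j<i}\varphi_j(x_j)+\sum_{j\ge i}\varphi^*_j(d_j)\le c$ that give $\varphi_i\ge\varphi_i^*$ on $D_i$ (your ``short induction'' does go through, but it is the one place a reader would want the details, including why no $\varphi_i$ ever takes the value $+\infty$, which uses finiteness of $\varphi_j^*$ at the base point), and then propagating the splitting equality from the dense subset $\{z^{(k)}\}$ to all of $\Gamma$ via upper semicontinuity of the $\varphi_i$ and continuity of $c$. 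The trade-off: your route avoids Tychonoff for uncountable products, using only sequential compactness of $\overline{\RR}$ and separability of Polish spaces, and it delivers an upper semicontinuous, hence measurable, tuple in one pass, effectively merging Lemmas 2.2 and 2.5; the paper's route is shorter, does not use separability or continuity of $c$ in this lemma, and isolates the regularity issue cleanly. One cosmetic point: you use $d$ both for the number of marginals and for generic points of $D_i$, which collides in expressions like $c(d_1,\dots,d_d)$.
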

\begin{proof}
(The result is trivial if $\Gamma$ is empty.) \\
We fix an element $x^0 \in \Gamma$. 
Define the functions $c_{i}: X_{i} \rightarrow [0, \infty)$ 
\beo
c_{i}: x_{i} \mapsto c(x_{1}^0, \dots, x_{i-1}^0,x_{i}, x_{i+1}^0, \dots, x_{d}^0).
\eeo
 For each finite subset $G$ of $\Gamma$, set $G' = G \cup \{ x^0 \}$. By the previous two lemmas, for each such $G'$ there is a $(G',c)$-splitting tuple with the components of the tuple bounded from above by $c_{1}, \dots, c_{d}$, respectively.
Now we define: 
\begin{align*}
\cG_{G} = \bigl\{ \varphi \equiv (\varphi_{1}, \dots, \varphi_{d}): \varphi \text{ is  a } (G', c)- \text{splitting tuple with } \\ \varphi_{i}(x_{i}) \leq c_{i}(x_{i}) \text{ for all } x_{i}\in X_{i}, i= 1, \dots, d \bigr\}.
\end{align*}
The sets  $\cG_{G'}$ are nonempty by our previous considerations. Note that they are closed in the topology of pointwise convergence on the \emph{compact} function space $\overline{\RR}^{X_{1}} \times \dots \times \overline{\RR}^{X_{d}}$. Also, the sets $\cG_{G'}$ have the finite intersection property: this is clear from 
\beo
\cG_{(G_{1} \cup G_{2})'} \subseteq \cG_{G_{1}'} \cap \cG_{G_{2}'}.
\eeo
Consequently, the set 
\beo
\cG = \bigcap_{G \subseteq \Gamma, \;  G \text{ finite}} \cG_{G'}
\eeo
is non-empty. It is easy to check that each of the tuples in $\cG$ is $(\Gamma, c)$-splitting. 
\end{proof}
\begin{proof}[Proof of Theorem 1.2.]
$\mu$ is concentrated on a $c$-cyclically monotone, and hence $c$-splitting set $\Gamma$. By the assumption on  $c$, for any  $x^0 = (x_{1}^0, \dots, x_{d}^0)$ in $\Gamma$ the functions 
\beo
c_{i}: x_{i} \mapsto c(x_{1}^0, \dots, x_{i-1}^0, x_{i}, x_{i+1}^0, \dots, x_{d}^0)
\eeo
are in $L_1(\mu_{i})$.
By Lemma 2.4, there is a measurable $(\Gamma, c)$-splitting tuple $(\varphi_{1}, \dots, \varphi_{d})$ such that 
\beo
\varphi_{i}(x_{i}) \leq c_{i}(x_{i}) ~~\text{ for all }x_{i} \in X_{i}, i=1, \dots, d.
\eeo
Hence, the functions $\varphi_{i}$ are all integrable against $\mu_{i}$, with the value of the integral in $[-\infty, \infty)$. 
Now take any $\mu' \in \cM(\mu_{1}, \dots, \mu_{d})$. 
We have, as $\mu$ is concentrated on the $c$-splitting set $\Gamma$, and $(\varphi_{1}, \dots, \varphi_{d})$ is $(\Gamma, c)$-splitting:
\beo
\int c \; d \mu = \sum \int \varphi_{i} \; d\mu_{i} \leq \int c \; d \mu'.
\eeo
\end{proof}

\bibliographystyle{alpha}
\bibliography{joint_biblio}

\end{document}